\setlist[1]{labelindent=\parindent}
\setlist[enumerate,1]{label = \arabic*.,ref   = \arabic*}
\setlist[enumerate,2]{label = \alph*),ref   = \theenumi\alph*)}
\setlist[enumerate,3]{label = \roman*),ref   = \theenumii.\roman*}
\newcommand{\from}{\colon}
\renewcommand{\models}{\vDash}
\newcommand{\proves}{\vdash}
\newcommand{\monster}{\mathfrak U}
\newcommand{\eq}{\mathrm{eq}}
\newcommand{\meet}{\sqcap}
\DeclarePairedDelimiter{\set}{\{}{\}}
\theoremstyle{definition}
\newtheorem{defin}{Definition}[section]
\newtheorem{pr}[defin]{Proposition}
\newtheorem{co}[defin]{Corollary}
\newtheorem{lemma}[defin]{Lemma}
\newtheorem{question}[defin]{Question}
\title{Some definable types that cannot be amalgamated}
\author{Martin Hils}
\address{Martin Hils, Institut f\"ur Mathematische Logik und Grundlagenforschung, Westf\"alische Wilhelms-Universit\"at M\"unster, Einsteinstr. 62, D-48149 M\" unster, Germany}
\email{hils@uni-muenster.de}
\thanks{}
\author{Rosario Mennuni}
\address{Rosario Mennuni, Dipartimento di Matematica, Universit\`a di Pisa, Largo Bruno Pontecorvo 5, 56127 Pisa, Italy}
\email{R.Mennuni@posteo.net}
\thanks{MH was partially supported by the German Research Foundation (DFG) via HI 2004/1-1 (part of the French-German ANR-DFG project GeoMod) and under Germany's Excellence Strategy EXC 2044-390685587, `Mathematics M\"unster: Dynamics-Geometry-Structure'.\\RM is supported by the project PRIN 2017: ``Mathematical Logic: models, sets, computability'' Prot.~2017NWTM8RPRIN}
\keywords{model theory, definable types, beautiful pairs, amalgamation property} 
\subjclass[2020]{Primary: 03C45, Secondary: 03C30.}
\begin{document}
\begin{abstract}
  We exhibit a theory where definable types lack the amalgamation property.
\end{abstract}
\maketitle

If $q_0(x,y)$ and $q_1(x,z)$ are types over a given set $A$, both extending the same type $p(x)\in S(A)$, it is an easy exercise to show that there is $r(x,y,z)\in S(A)$ extending $q_0(x,y)$ and $q_1(x,z)$ simultaneously. In other words, in every theory, types have the \emph{amalgamation property}. Suppose now that $p,q_0,q_1$ all belong to some special class $\mathcal K$ of types, and consider the following question: among the amalgams $r$ as above, is there always one which belongs to $\mathcal K$? In this short note, we prove that, for $\mathcal K$ the class of definable types, the answer is in general negative.

By a fundamental result of Shelah, a complete theory is stable if and only if all types over  models are definable. Definable types, and the tightly related notion of \emph{stable embeddedness}, recently attracted considerable attention in unstable contexts as well.
For instance, Hrushovski isolated a criterion for elimination of imaginaries in terms of  density of definable types, which yielded a simplified proof of the classification of imaginaries in algebraically closed valued fields~\cite{Joh20}, and similar classifcation results in other (enriched)  henselian valued fields~\cite{RidVDF,HKR2016,HiRi}. In o-minimal theories,  stable embeddedness of elementary substructures corresponds to relative Dedekind completeness~\cite{MS}, and in benign theories of henselian valued fields stable embeddedness obeys an Ax--Kochen--Ershov principle~\cite{CD,touchard}.  Definable types are also central in Hrushovski and Loeser's celebrated work on  Berkovich analytifications~\cite{HL}: their  \emph{stable completions} of algebraic varieties are certain spaces of definable types which, crucially, form \emph{strict pro-definable sets}.

This brings us to the main motivation for the present paper. If $T$ is stable, then definable types over $M$ may be seen as a pro-definable set in $M^\eq$, albeit  this pro-definability need not be strict:  it follows from work of Poizat on \emph{belles paires}~\cite{poizat}  that, if $T$ is stable, then definable types over models 
form strict pro-definable sets if and only if $T$ is $\mathsf{nfcp}$, if and only if all belles paires of models of $T$ are $\omega$-saturated. 
In order to establish strict pro-definability for other spaces of definable types, Cubides, Ye and the first author~\cite{bp} recently introduced \emph{beautiful pairs} in an arbitrary $L$-theory $T$. 
Poizat's belles paires are beautiful, and his theory generalises smoothly to unstable $T$, provided the latter satisfies certain assumptions: an extension property called (EP), and the amalgamation property (AP) for definable types.

In stable theories, (AP) and (EP) always hold. Similarly in o-minimal theories, where (AP) follows from a result of Baisalov--Poizat~\cite{BaPo98} (see~\cite[Section~3.2]{bp}). In benign valued fields, there is an Ax--Kochen--Ershov type reduction for (AP)~\cite[Section~8]{bp}. In~\cite[Corollary~2.4.16]{bp} an  example of a (dp-minimal) theory satisfying (AP) but not (EP) was found, namely the levelled binary tree with level set $\omega$. Whether there is a theory where (AP) fails is left open in \cite{bp}.

Building on the aforementioned tree, we construct such an example.

\section{The theory}
Models $M$ of the theory in which our counterexample lives are four-sorted, and are roughly obtained as follows. We start with a binary tree $\operatorname{T}(M)$, with discrete level set $\operatorname{L}(M)$. We then introduce two levelled sets $\operatorname{A}(M)$ and $\operatorname{B}(M)$, both with the same level set as the tree, namely $\operatorname{L}(M)$, and cover each level $x$ of $\operatorname{T}(M)$  with a generic surjection from the cartesian product of the $x$-th levels of $\operatorname{A}(M)$ and $\operatorname{B}(M)$. In this section, we spell out this construction in detail.
\begin{defin}
  Let $L$ be the following language.
  \begin{enumerate}
  \item $L$ has four sorts $\mathrm{A}$, $\mathrm{B}$, $\mathrm{T}$, $\mathrm{L}$.
  \item $\mathrm{T}$ has a binary relation $\le_\mathrm{T}$, a binary function $\meet$, a unary function $\operatorname{pred}_\mathrm{T}$,  constants $g_\mathrm{T},r$.
  \item $\mathrm{L}$ has a binary relation $\le_\mathrm{L}$,   a unary function $\operatorname{pred}_\mathrm{L}$ and constants $g_\mathrm{L}$, $0$.
  \item There are functions $\ell_\mathrm{T}\from \mathrm{T}\to \mathrm{L}$, $\ell_\mathrm{A}\from \mathrm{A}\to \mathrm{L}$, and $\ell_\mathrm{B}\from \mathrm{B}\to \mathrm{L}$.
  \item There is a function $f\from \mathrm{A}\times \mathrm{B}\to \mathrm{T}$.
  \end{enumerate}
\end{defin}
\begin{defin}
  Let $T$ be the $L$-theory expressing the following properties.
  \begin{enumerate}[label=(\roman*)]
  \item $0\neq g_\mathrm{L}$, and $(\mathrm{L}\setminus \set{g_\mathrm{L}},\le_\mathrm{L})$ is a discrete linear order with smallest element 0 and no largest element, with predecessor function $\operatorname{pred}_\mathrm{L}$, with the convention that $\operatorname{pred}_\mathrm{L}(0)=0$. The ``garbage'' point $g_\mathrm{L}$ is not $\le_\mathrm{L}$-related to anything, and $\operatorname{pred}_\mathrm{L}(g_L)=g_L$.
  \item $(\mathrm{T}\setminus\set{g_\mathrm{T}}, \le_\mathrm{T}, \meet)$ is a meet-tree with root $r$, binary ramification,\footnote{We fix binary ramification for simplicity, but this is not important: any fixed finite ramification will work.} and, for every fixed element, its  set of predecessors  is a discrete linear order, with predecessor function $\operatorname{pred}_\mathrm{T}$, with the convention that $\operatorname{pred}_\mathrm{L}(r)=r$. The ``garbage'' point $g_\mathrm{T}$ behaves similarly to the garbage point $g_\mathrm{L}$.
  \item $\ell_\mathrm{T}$ is a a surjective level function $T\setminus \set{g_\mathrm{T}}\to L\setminus \set{g_\mathrm{L}}$, extended by $\ell_\mathrm{T}(g_\mathrm{T})=g_\mathrm{L}$. For every fixed element $t\in \mathrm{T}\setminus \set{g_\mathrm{T}}$, the restriction of $\ell_T$ to the set of predecessors of $t$ defines an order isomorphism onto an initial segment of $\mathrm{L}\setminus\set{g_T}$ (in particular, $\ell_\mathrm{T}\circ \operatorname{pred}_\mathrm{T}=  \operatorname{pred}_\mathrm{L}\circ \ell_\mathrm{T}$). Moreover, for any $t$ from $\mathrm{T}\setminus\{g_\mathrm{T}\}$ and any $y$ in $\mathrm{L}$ with $\ell_\mathrm{T}(t)\leq_\mathrm{L} y$ there is $t'$ in $T$ with $t\leq_\mathrm{T}t'$ such that $\ell_\mathrm{T}(t')=y$.
  \item $g_\mathrm{L}$ is not in the image of $\ell_\mathrm{A}\from \mathrm{A}\to \mathrm{L}$, nor in that of $\ell_\mathrm{B}\from \mathrm{B}\to \mathrm{L}$.
  \item For every $c\in\mathrm{L}\setminus\set{g_\mathrm{L}}$ the fibers $\ell_\mathrm{A}^{-1}(c)$ and $\ell_\mathrm{B}^{-1}(c)$ are infinite.
  \item $f(a,b)= g_\mathrm{T}$ if and only if  $\ell_\mathrm{A}(a)\ne \ell_\mathrm{B}(b)$. 
  \item If $\ell_\mathrm{A}(a)= \ell_\mathrm{B}(b)$, then $\ell_\mathrm{T}(f(a,b))=\ell_\mathrm{A}(a)$.
  \item At any level, $f$ defines a generic surjection: for any $c\in\mathrm{L}\setminus\{ g_\mathrm{L}\}$, any $t_1,\ldots, t_n$ from $\mathrm{T}\setminus\set{g_\mathrm{T}}$ and any pairwise distinct $a_1,\ldots,a_n$ from $\mathrm{A}$ such that $\ell_\mathrm{T}(t_i)=c=\ell_\mathrm{A}(a_i)$ for all $i$, there are infinitely many $b$ from $\mathrm{B}$ such that, for all $i$, we have  $f(a_i,b)=t_i$; similarly with the roles of $\mathrm{A}$ and $\mathrm{B}$ interchanged.
  \end{enumerate}
\end{defin}
\begin{pr}\label{Pr:QE} The following properties hold.
\begin{enumerate}
\item  $T$ is complete and admits quantifier elimination. 
\item The union of the definable sets $\mathrm{T}\setminus\{g_T\}$ and $ \mathrm{L}\setminus\{g_L\}$ is stably embedded, with induced structure a pure levelled (binary) meet-tree. In particular,  $\mathrm{L}\setminus\{g_L\}$ is 
stably embedded with induced structure a pure ordered set.
\end{enumerate}
\end{pr}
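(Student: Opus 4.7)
My plan for (1) is the standard back-and-forth, carried out between two $\aleph_1$-saturated models $M_1, M_2\models T$ and a partial $L$-isomorphism $\sigma\from A_1\to A_2$ between finitely generated substructures (each of which automatically contains the named constants $g_\mathrm{T}, r, g_\mathrm{L}, 0$). I would extend $\sigma$ one element at a time, splitting into cases by the sort of the new element. Completeness then follows because the substructure generated by the constants is canonical, so QE yields a unique common theory.

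For $m \in \mathrm{L}(M_1)\setminus\{g_\mathrm{L}\}$, the quantifier-free type of $m$ over $A_1$ is determined by its $\le_\mathrm{L}$-position among the finitely many $\mathrm{L}$-elements of $A_1$; a matching $m'$ exists by $\aleph_1$-saturation together with the discrete linear order axioms. For $m \in \mathrm{T}(M_1)\setminus\{g_\mathrm{T}\}$, the type is determined by the meet-tree position of $m$ relative to the existing tree elements together with $\ell_\mathrm{T}(m)$, and the existence of $m'$ follows from axioms (ii)--(iii) and saturation, using binary ramification and the level-surjectivity clause. For $m \in \mathrm{A}(M_1)$ (the $\mathrm{B}$-case is dual), I would first enlarge $A_1$ so that $\sigma$ also covers $\ell_\mathrm{A}(m)$ and each tree element $f(m,b_j)$ for every $b_j \in \mathrm{B}(A_1)$ with $\ell_\mathrm{B}(b_j) = \ell_\mathrm{A}(m)$; this costs only finitely many preceding-case steps. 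With the target values now in $A_2$, axiom (viii) applied in $M_2$ (with the roles of $\mathrm{A}$ and $\mathrm{B}$ interchanged) supplies infinitely many candidates for $m'$, from which I pick one distinct from each existing element of $\mathrm{A}(A_2)$. The main obstacle is precisely this $\mathrm{A}/\mathrm{B}$-case: one must pre-enlarge the substructure to absorb all $f$-values forced by the new element, and then verify that the residual constraints match exactly those covered by the ``generic surjection'' axiom.

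For (2), I would deduce stable embeddedness directly from QE. A quantifier-free formula with free variables $\bar x$ in $\mathrm{T}\cup\mathrm{L}$ and parameters $\bar p$ from the full structure decomposes by sort: every subterm in $\bar x$ uses only $\meet$, $\operatorname{pred}_\mathrm{T}$, $\operatorname{pred}_\mathrm{L}$, $\ell_\mathrm{T}$, which keep the term inside $\mathrm{T}\cup\mathrm{L}$; every subterm involving an $\mathrm{A}$- or $\mathrm{B}$-parameter must first enter $\mathrm{T}\cup\mathrm{L}$ via $\ell_\mathrm{A}$, $\ell_\mathrm{B}$, or $f$ before any possible interaction with $\bar x$, and so contributes only fixed elements of $\mathrm{T}\cup\mathrm{L}$. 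Replacing each such composite by its value yields an equivalent formula in the language $\{\le_\mathrm{T}, \meet, \operatorname{pred}_\mathrm{T}, g_\mathrm{T}, r, \le_\mathrm{L}, \operatorname{pred}_\mathrm{L}, g_\mathrm{L}, 0, \ell_\mathrm{T}\}$ with parameters in $\mathrm{T}\cup\mathrm{L}$, which is exactly the language of a levelled meet-tree. For the ``in particular'' clause, restricting further to variables in $\mathrm{L}\setminus\{g_\mathrm{L}\}$ leaves only $\le_\mathrm{L}$ and $\operatorname{pred}_\mathrm{L}$, and since $\operatorname{pred}_\mathrm{L}$ is $\le_\mathrm{L}$-definable in a discrete linear order with smallest element, the induced structure on $\mathrm{L}\setminus\{g_\mathrm{L}\}$ is that of a pure linear order.
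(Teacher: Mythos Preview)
Your proposal is correct and follows essentially the same approach as the paper: the paper reduces (1) to the standard embedding test for quantifier elimination (countable model into $\aleph_1$-saturated model over a common substructure) and declares the one-step extension ``an easy exercise'', while you spell out exactly that exercise via the symmetric back-and-forth, with the same case split by sort and the same key idea of pre-absorbing the $f$-values before invoking axiom~(viii). For (2), the paper simply says it is ``a direct consequence of quantifier elimination'', and your term-by-term analysis is precisely how one cashes that out.
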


\begin{proof}
It is easy to see that $T$ is consistent. It is enough to prove quantifier elimination: since $T$ admits a prime $L$-substructure $M_0$, with underlying set the interpretations of the closed $L$-terms over $\emptyset$, completeness of $T$ will follow, and (2) is a direct consequence of quantifier elimination.

Let $N_0$ and $N_1$ be models of $T$ with $N_0$ countable and $N_1$ $\aleph_1$-saturated, and let $M$ be a common $L$-substructure of $N_0$ and $N_1$. It is an easy exercise to $M$-embed $N_0$ into $N_1$, yielding quantifier elimination.
\end{proof}

The theory induced on $\mathrm{T}\setminus\{g_T\}$ and $ \mathrm{L}\setminus\{g_L\}$ is thus precisely the one used in \cite[Fact~2.4.15]{bp}.

\begin{lemma}\label{lemma:linmax}
For all $M\models T$,  all linearly ordered definable subsets of $\operatorname{T}(M)$ have a maximum.
\end{lemma}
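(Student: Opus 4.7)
The plan is to reduce to the pure levelled meet-tree reduct on $\mathrm{T}\cup\mathrm{L}$ granted by \Cref{Pr:QE}(2), and then to rule out, via a homogeneity argument, that the $\ell_\mathrm{T}$-image of a linearly ordered definable $X$ is cofinal in $\mathrm{L}$. By \Cref{Pr:QE}(2), $\mathrm{T}\cup\mathrm{L}$ is stably embedded with induced structure a pure levelled binary meet-tree; hence any $L$-definable $X\subseteq\operatorname{T}(M)$ is already definable in this reduct over parameters from $\mathrm{T}\cup\mathrm{L}$. Since both ``linearly ordered'' and ``has a maximum'' are first-order properties, I may freely pass to a monster model $\monster$ of this reduct. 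If $g_\mathrm{T}\in X$ then $X=\{g_\mathrm{T}\}$ (it is $\le_\mathrm{T}$-incomparable to everything else), and the claim is trivial; so I assume $X\subseteq\mathrm{T}\setminus\{g_\mathrm{T}\}$ throughout.

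I would next transfer the question to $\mathrm{L}$. Distinct elements of $\mathrm{T}\setminus\{g_\mathrm{T}\}$ at the same level are $\le_\mathrm{T}$-incomparable, since $\ell_\mathrm{T}$ is an order isomorphism from the predecessors of any fixed node onto an initial segment of $\mathrm{L}$; hence the restriction $\ell_\mathrm{T}\restriction X\from X\to Y:=\ell_\mathrm{T}(X)$ is an order-preserving bijection, and $X$ has a maximum exactly when $Y$ does. The set $Y$ is definable in the pure discrete linear order $(\mathrm{L}\setminus\{g_\mathrm{L}\},\le_\mathrm{L},\operatorname{pred}_\mathrm{L},0)$, whose definable subsets are, by a routine quantifier elimination, finite unions of intervals. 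Such a set admits a maximum as soon as it is bounded above, so it suffices to rule out that $Y$ is cofinal in $\mathrm{L}\setminus\{g_\mathrm{L}\}$.

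The core of the proof is the cofinal case. Assuming for contradiction $Y\supseteq (c_0,+\infty)$ for some $c_0$, for every $c>c_0$ there is a unique $t_c\in X$ at level $c$, and the assignment $c\mapsto t_c$ is definable over the parameters $\bar a$ defining $X$. Choose $c_1>c_0$ strictly above $\ell_\mathrm{T}(a)$ for each $\mathrm{T}$-coordinate $a$ of $\bar a$, pick any $c_2>c_1+1$, and set $t_1:=t_{c_1}$ and $t_2:=t_{c_2}$. By the choice of $c_1$, no coordinate of $\bar a$ lies strictly above $t_1$, while $t_2$ does. The node $t_1$ has exactly two children $t_1^L,t_1^R$, and $t_2$ lies strictly above precisely one of them. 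Using back-and-forth in $\monster$, I would produce an automorphism that fixes everything not strictly above $t_1$ and swaps the two cones above $t_1^L$ and $t_1^R$. Such an automorphism fixes $\bar a$ pointwise but moves $t_2$, contradicting the definability of $t_2$ over $\bar a$ as the unique element of $X$ at level $c_2$.

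The only delicate step is the construction of the swap automorphism: one must check that the two cones above $t_1^L$ and $t_1^R$ are isomorphic as parameter-free levelled binary meet-trees — which follows from their being rooted at nodes of level $\ell_\mathrm{T}(t_1)+1$ in a saturated pure levelled tree — and that this cone swap extends to a global automorphism of $\monster$ which is the identity off the cone above $t_1$. Both points are handled by a standard back-and-forth, with the usual inductive level-by-level bookkeeping, once quantifier elimination for the pure levelled tree reduct is in hand; this is where one needs to be careful, but it presents no genuine obstacle, and the rest of the argument is routine.
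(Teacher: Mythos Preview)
Your proof is correct. It parallels the paper's alternative argument, which also hinges on an automorphism swapping sub-cones above a node to show that no unbounded chain can be definable. The paper, however, is far more economical: its primary proof is the single line ``this follows from quantifier elimination'', and the alternative it sketches works in the concrete standard model $(2^{<\omega},\omega)$ rather than in a monster. There, any chain without a maximum is visibly cofinal in a unique branch $s\in 2^{\omega}$, and for any $n$ above the levels of the parameters one swaps $s$ with any branch $s'$ agreeing with $s$ below $n$ by an obvious automorphism over $2^{<n}$; no back-and-forth is needed. Your detour through stable embeddedness of $\mathrm{L}$ and quantifier elimination for the discrete order, used to isolate a final segment inside $Y$, is sound but becomes unnecessary once one works in the standard model. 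What your approach buys is self-containment within the monster (no appeal to a specific small model), at the cost of having to justify the cone-swap automorphism explicitly.
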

\begin{proof}
This follows from quantifier elimination. Alternatively, one may use that no infinite branch is definable in the standard binary meet-tree $(2^{<\omega},\omega)$, e.g., since for 
any $n\in \omega$ and branches $s,s'\in 2^{\omega}$ with $s\upharpoonright_n=s'\upharpoonright_n$ there is an automorphism $\sigma$ over $2^{< n}$ with $\sigma(s)=s'$.
\end{proof}

\section{The types}
Let $T$ be the theory defined in the previous section, and $\monster\models T$ a monster model. Failure of amalgamation of definable types boils down to the following phenomenon.  All elements $y$ of $\mathrm{A}$ with level larger than $\operatorname{L}(\monster)$ have the same, definable, type, and similarly for $z$ in $\mathrm{B}$; nevertheless, if such $y$ and $z$ have the \emph{same} infinite level, then $f(y,z)$ can be used to produce an externally definable subset of $\operatorname{T}(\monster)$ which is not definable. More formally, we proceed as follows.
\begin{defin}
 Define the following sets of $L(\monster)$-formulas.
  \begin{enumerate}
  \item $p(x)$ is the global type of an element $x$ of sort $\mathrm{L}$ such that $x>\operatorname{L}(\monster)$.
  \item $q_\mathrm{A}(x,y)$ restricts to $p$ on $x$, and says that $y$ is an element of sort $\mathrm{A}$ with $\ell_\mathrm{A}(y)=x$.
  \item $q_\mathrm{B}(x,z)$ restricts to $p$ on $x$, and says that $z$ is an element of sort $\mathrm{B}$ with $\ell_\mathrm{B}(y)=x$.
  \end{enumerate}
\end{defin}

    \begin{lemma} All of $p,q_\mathrm{A}, q_\mathrm{B}$ are complete types over $\monster$ which are $\emptyset$-definable.
  \end{lemma}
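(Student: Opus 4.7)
The plan is to invoke the quantifier elimination of Proposition~\ref{Pr:QE}, which reduces completeness and $\emptyset$-definability of each type to an analysis of atomic $L$-formulas with parameters from $\monster$. A useful preliminary step is to observe that, if $x>_\mathrm{L}\operatorname{L}(\monster)\setminus\{g_\mathrm{L}\}$, then the same is true of every iterated predecessor $\operatorname{pred}_\mathrm{L}^n(x)$. Indeed, otherwise $[\operatorname{pred}_\mathrm{L}^n(x),x]$ would contain an interval $[m,x]$ with $m\in\operatorname{L}(\monster)\setminus\{g_\mathrm{L}\}$; but $[m,x]$ contains infinitely many elements of $\operatorname{L}(\monster)$ (the successor chain of $m$ inside $\monster$, which stays in $\operatorname{L}(\monster)$ by (i), and below $x$ by hypothesis), whereas $[\operatorname{pred}_\mathrm{L}^n(x),x]$ is finite by discreteness.

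For $p$, this observation determines every atomic $L$-formula $\varphi(x,\bar w)$ with $\bar w\in\monster$: any sort-$\mathrm{L}$ term $\tau(\bar w)$ built from $\bar w$ evaluates in $\operatorname{L}(\monster)$, and the truth value of the atomic comparisons $\operatorname{pred}_\mathrm{L}^n(x)\le_\mathrm{L}\tau(\bar w)$ and $\operatorname{pred}_\mathrm{L}^n(x)=\tau(\bar w)$ is then controlled uniformly by whether $\tau(\bar w)=g_\mathrm{L}$ or not. This yields a $\emptyset$-definable defining scheme, so $p\in S(\monster)$ is complete and $\emptyset$-definable.

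For $q_\mathrm{A}(x,y)$, the analysis proceeds by the same method, with one extra ingredient: since $\ell_\mathrm{B}(b)\in\operatorname{L}(\monster)\setminus\{g_\mathrm{L}\}$ for every $b\in\operatorname{B}(\monster)$ by (iv), while $\ell_\mathrm{A}(y)=x>_\mathrm{L}\operatorname{L}(\monster)\setminus\{g_\mathrm{L}\}$, axiom (vi) forces $f(y,b)=g_\mathrm{T}$ for every such $b$. This collapses every atomic formula involving a term $f(y,\cdot)$ to a trivial garbage-point comparison. The remaining atomic formulas about $y$ are equalities $y=a$ with $a\in\operatorname{A}(\monster)$ (always false, by the level mismatch) and sort-$\mathrm{L}$ formulas in which $\ell_\mathrm{A}(y)$ appears, which reduce via $\ell_\mathrm{A}(y)=x$ to the case of $p$. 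The argument for $q_\mathrm{B}$ is symmetric. I do not foresee any substantive obstacle: the argument is a routine case analysis once QE is invoked, and the only mildly delicate point is the cofinality argument of the first paragraph.
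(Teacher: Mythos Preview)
Your proof is correct and follows essentially the same approach as the paper: both invoke quantifier elimination and hinge on the observation that $f(y,b)=g_\mathrm{T}$ for every $b\in\operatorname{B}(\monster)$ because $\ell_\mathrm{A}(y)$ is a new level. The only cosmetic difference is that for completeness of $p$ the paper cites Proposition~\ref{Pr:QE}(2) (stable embeddedness of $\mathrm{L}\setminus\{g_\mathrm{L}\}$ as a pure order) rather than carrying out your direct atomic-formula analysis, but this amounts to the same thing.
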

  \begin{proof}
    Consistency and $\emptyset$-definability are clear. As for completeness, we argue as follows.
      \begin{enumerate}
  \item Completeness of $p(x)$ follows from Proposition~\ref{Pr:QE}(2).
  \item As for $q_\mathrm{A}(x,y)$, note that, since $y$ has a new level, it cannot be in $\mathrm{A}(\monster)$.  Again because $\ell_\mathrm{A}(y)$ is new,  for all $b\in \mathrm{B}(\monster)$ we must have $f(y,b)=g_\mathrm{T}$. By quantifier elimination, this is enough to determine a complete type.
  \item The argument for $q_\mathrm{B}(x,z)$ is symmetrical.\qedhere
  \end{enumerate}
  \end{proof}
  \begin{pr}
The types $q_\mathrm{A}(x,y)$ and $q_\mathrm{B}(x,z)$ cannot be amalgamated over $p(x)$ into a definable type. In other words, no completion of $q_\mathrm{A}(x,y)\cup q_\mathrm{B}(x,z)$ is definable. 
  \end{pr}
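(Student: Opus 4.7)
The plan is to argue by contradiction: suppose $r(x,y,z)$ is a complete definable global type extending $q_\mathrm{A}(x,y)\cup q_\mathrm{B}(x,z)$, and realise $r$ in an elementary extension $\monster'\succeq \monster$ via $(x_0, y_0, z_0)$. Since $r$ extends both $q_\mathrm{A}$ and $q_\mathrm{B}$, one has $\ell_\mathrm{A}(y_0) = x_0 = \ell_\mathrm{B}(z_0)$ with $x_0 >_\mathrm{L} \mathrm{L}(\monster)$, so axiom~(vii) (applied inside $\monster'$) forces $\ell_\mathrm{T}(f(y_0,z_0)) = x_0$; in particular $f(y_0,z_0)$ sits at a level strictly above every element of $\mathrm{L}(\monster)$.

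Next I would introduce the set
\[
  S \coloneqq \set*{t\in \mathrm{T}(\monster) : \bigl(t\leq_\mathrm{T} f(y,z)\bigr)\in r},
\]
which by definability of $r$ is an $\monster$-definable subset of $\mathrm{T}(\monster)$. Since any two predecessors of $f(y_0,z_0)$ are $\leq_\mathrm{T}$-comparable, $S$ is linearly ordered, so Lemma~\ref{lemma:linmax} produces a maximum $t_0\in S$; set $c_0\coloneqq\ell_\mathrm{T}(t_0)\in\mathrm{L}(\monster)\setminus\set{g_\mathrm{L}}$.

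The contradiction comes by exhibiting some $t^*\in S$ strictly above $t_0$. Because $\mathrm{L}(\monster)\setminus\set{g_\mathrm{L}}$ is discretely ordered with no maximum, $c_0$ admits a successor $c_0^+$ in it; by axiom~(iii), $f(y_0,z_0)$ has a unique predecessor $t^*$ at level $c_0^+$, and axiom~(iii) applied to $t^*$ gives $\operatorname{pred}_\mathrm{T}(t^*)=t_0$, so $t^*$ is an immediate successor of $t_0$. The crucial point — and the step that I expect to need the most care — is verifying that $t^*$ already lies in $\mathrm{T}(\monster)$: binary ramification is axiomatic in $T$, so in any model $t_0$ has exactly two immediate successors, and elementarity of $\monster\preceq\monster'$ forces the two immediate successors of $t_0$ in $\monster'$ to coincide with its two immediate successors in $\monster$. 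Hence $t^*\in\mathrm{T}(\monster)$, so $t^*\in S$ with $t_0<_\mathrm{T} t^*$, contradicting the maximality of $t_0$.

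The main obstacle, as suggested above, is the last step. Without finite ramification there would be no a priori reason for $t^*$ to stay inside $\monster$, and the whole argument would collapse; once this is in place, everything else (definability of $r$ yielding $S$ definable, linearity of $S$, and Lemma~\ref{lemma:linmax}) is essentially bookkeeping with the axioms.
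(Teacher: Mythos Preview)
Your argument is correct and follows essentially the same route as the paper: form the definable set of $\monster$-predecessors of $f(y,z)$, invoke \Cref{lemma:linmax} to obtain a maximum, and use binary ramification to contradict maximality. The only difference is cosmetic---the paper compresses your last paragraph into the single clause ``$f(y,z)\notin\monster$ contradicts binary ramification'', whereas you spell out explicitly why the immediate successor $t^*$ must already lie in $\monster$.
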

  \begin{proof}
    Suppose $r(x,y,z)$ is a completion of $q_\mathrm{A}(x,y)\cup q_\mathrm{B}(x,z)$. Then $r(x,y,z)\proves \ell_\mathrm{A}(y)=x=\ell_\mathrm{B}(z)$,  thus $r(x,y,z)\proves \ell_\mathrm{T}(f(y,z))=x$. Since $p(x)$ is not realised, $f(y,z)$, having a new level, cannot be in $\operatorname{T}(\monster)$. Consider the set $\set{d\in \operatorname{T}(\monster)\mid r(x,y,z)\proves f(y,z)>d}$. If $r(x,y,z)$ is definable, then this set is definable. As a set of predecessors,  it must be linearly ordered, hence  have a maximum by \Cref{lemma:linmax}. But then $f(y,z)\notin \monster$ contradicts binary ramification.
  \end{proof}    
    \begin{co}
    In $T$, global definable types  do not have the amalgamation property.
    \end{co}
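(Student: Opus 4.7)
The plan is immediate: the Corollary should fall out of the preceding Proposition by unwinding definitions, so I would write it as a short observation rather than a fresh argument. Let $\mathcal{K}$ denote the class of global definable types. The Lemma just before the Proposition certifies that all three of $p$, $q_\mathrm{A}$, and $q_\mathrm{B}$ lie in $\mathcal{K}$; by construction, $q_\mathrm{A}(x,y)$ and $q_\mathrm{B}(x,z)$ both restrict to $p(x)$ on the variable $x$. If $\mathcal{K}$ enjoyed the amalgamation property, there would exist a completion $r(x,y,z)\in \mathcal{K}$ of $q_\mathrm{A}(x,y)\cup q_\mathrm{B}(x,z)$, that is, a \emph{definable} completion over $\monster$. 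The Proposition asserts exactly that no such completion exists, contradicting the assumed amalgamation and yielding the Corollary.

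The only point of care is aligning the formulation of amalgamation given in the introduction --- where $p$, $q_0$, $q_1$ are taken over an arbitrary base set $A$ --- with the present setting, where the three types live over the monster $\monster$. This specialisation is in fact the right one for the class of global definable types: the amalgamation property for $\mathcal{K}$ is precisely the assertion that any two definable extensions of a common definable global type admit a common definable global amalgam, so taking $A = \monster$ is not only legitimate but exactly what is required. There is therefore no real obstacle in the proof of the Corollary; all the substantive content has already been absorbed into the Proposition, which in turn rests on the tree-theoretic input from Lemma~\ref{lemma:linmax} (no infinite linearly ordered definable subset of $\mathrm{T}$ lacks a maximum) together with binary ramification.
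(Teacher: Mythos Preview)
Your proposal is correct and matches the paper's approach: the Corollary is stated without proof, as it follows immediately from the preceding Proposition together with the Lemma showing $p$, $q_\mathrm{A}$, $q_\mathrm{B}$ are $\emptyset$-definable global types. Your unwinding of the amalgamation property and its specialisation to $A=\monster$ is exactly the intended reading.
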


  This partially answers \cite[Question~9.3.1]{bp}, asking whether there is such a theory which, additionally, has uniform definability of types; note that $T$ does not. Moreover, $T$ is easily shown to have $\mathsf{IP}$.
  \begin{question}
    Is there a $\mathsf{NIP}$ theory where global definable types do not have the amalgamation property?
  \end{question}

\end{document}